\documentclass[12pt]{amsart}
\usepackage[lite, alphabetic, nobysame]{amsrefs}
\usepackage{amsmath,mathtools}
\usepackage{amssymb}
\usepackage{srcltx}
\usepackage{xcolor}

\newcommand{\MB}{\boldsymbol{M}}       
\newcommand{\NB}{\boldsymbol{N}}       
\newcommand{\AB}{\boldsymbol{A}}       
\newcommand{\BB}{\boldsymbol{B}}       
\newcommand{\CB}{\boldsymbol{C}}
\newcommand{\DB}{\boldsymbol{D}}

\newcommand{\KB}{\boldsymbol{K}}

\newcommand{\N}{\mathbb{N}}

\newcommand{\KC}{\mathcal{K}}

\newcommand{\LC}{\mathcal{L}}

\newcommand{\Aut}{\mathrm{Aut}}
\newcommand{\Homeo}{\mathrm{Homeo}}

\newcommand{\acts}{\curvearrowright}

\newcommand{\Fraisse}{Fra\"iss\'e}

\textwidth15cm
\textheight21cm
\evensidemargin.2cm
\oddsidemargin.2cm

\addtolength{\headheight}{5.2pt}    

\newtheorem{theorem}{Theorem}[section]
\newtheorem{lemma}{Lemma}

\newtheorem{proposition}[theorem]{Proposition}

\newtheorem*{lemma*}{Lemma}

\theoremstyle{definition}

\newtheorem*{definition*}{Definition}
\newtheorem*{Ack}{Acknowledgements}

\theoremstyle{remark}

\newenvironment{theorem*}[1][]{\par\medskip\noindent\textbf{Proposition} #1\textbf{.}\itshape}{\medskip}

\newenvironment{theorem**}[1][]{\par\medskip\noindent\textbf{Theorem} #1\textbf{.}\itshape}{\medskip}

\author{Aristotelis Panagiotopoulos}
\address{Department of Mathematics, 1409 W. Green st., University of Illinois, Urbana, IL 61801, USA}
\email{panagio2@illinois.edu}

\begin{document}
\title{Compact spaces as quotients of projective Fra\"{I}ss\'E limits}

\maketitle
\begin{abstract}
We develop a theory of projective \Fraisse{} limits in the spirit of Irwin-Solecki. The structures here will additionally support dual semantics as in \cite{SoleckiRamsey1,SoleckiRamsey2}. Let $Y$ be a compact metrizable space and let $G$ be a closed subgroup of $\Homeo(Y)$. We show that there is always a projective \Fraisse{} limit $\KB$  and a closed equivalence relation $\mathfrak{r}$ on its domain $K$ that is definable in $\KB$,  so that the quotient of $K$ under $\mathfrak{r}$ is homeomorphic to $Y$ and the projection $K\to Y$ induces a continuous group embedding 
$\Aut(\KB)\hookrightarrow G$ with dense image.

\end{abstract}

\section*{Introduction}
The interplay between \Fraisse{} theory and dynamics of non-Archimedean Polish groups established by the correspondence between closed subgroups of $S_\infty$ and countable \Fraisse{} structures has given rise to a reach theory with various interesting results; see for example the survey \cite{survey}. Taking this fact as a departure point it is natural to seek  what would the natural correspondence be for projective \Fraisse{} structures.

Projective \Fraisse{} structures were introduced by T. Irwin and S. Solecki in \cite{Irwin} and since then they  have been used by several authors in the study of the dynamics of compact spaces such as the pseudo-arc, the Lelek fan and the Cantor space;  see \cite{Irwin}, \cite{Lelek}, \cite{Ample}, \cite{Large}. There is a standard process that is implemented in all these papers. One starts with a compact metrizable space $Y$ where $Y$ or $\Homeo(Y)$ is under investigation. Then one defines an appropriate class $\KC$ of finite model theoretic $\LC$-structures  which ``approximate'' $Y$. If this class $\KC$ satisfies the {\em projective \Fraisse{} axioms}  then the projective \Fraisse{} limit $\KB$ of $\KC$ is uniquely defined and it is a compact, zero-dimensional metrizable space $K$ together with closed interpretations for the model-theoretic content $\mathcal{L}$; see \cite{Irwin}, or Section \ref{Section: projective Fraisse structures}. In all cases above, whenever $Y$ is not totally disconnected \cite{Irwin,Lelek}, the language $\mathcal{L}$ contained---implicitly or explicitly---a special binary predicate $\mathfrak{r}$ whose interpretation in all finite structures of $\mathcal{K}$ is a reflexive, symmetric graph, and whose interpretation in $\KB$ is a closed equivalence relation. Moreover, the space $K/\mathfrak{r}$ is homeomorphic to $Y$ and the quotient projection map $K\mapsto Y$
induces a continuous group embedding $\Aut(\KB)\hookrightarrow \mathrm{Homeo}(Y)$ whose image is dense in $\mathrm{Homeo}(Y)$.
This correspondence between $\KB$ and $Y$ allows one to study $Y$ and $\mathrm{Homeo}(Y)$ through the combinatorial properties of the finite structures in $\mathcal{K}$.

The purpose of this note is to turn these heuristics into a general theorem. The price we have to pay is that we have to include dual predicates in our language $\mathcal{L}$ and use them to further endow our structures with dual semantics. Dual predicates and structures were first introduced by S. Solecki in \cite{SoleckiRamsey1,SoleckiRamsey2}, where they were used to institute a uniform structural approach to various Ramsey theoretic results. Dual predicates in $\LC$ quantify over dual tuples (finite clopen partitions). We will call a $\LC$-structure purely dual if $\LC$ contains only dual predicates.

In Section \ref{Section: projective Fraisse structures}, we rewrite the projective \Fraisse{} theory developed in \cite{Irwin} so that it includes the dual semantics found in \cite{SoleckiRamsey1,SoleckiRamsey2}.  In Section \ref{Section: Closed subgroups of homeomorphism groups}, we use a standard orbit completion argument and equipping $\Homeo(K)$ with the compact-open topology we get the following proposition.

\begin{theorem*}[\textbf{\ref{proposition orbits}}]
Let $G$ be a closed subgroup of $\Homeo(K)$ where $K$ is zero-dimensional, compact, metrizable space. Then there is a purely dual projective \Fraisse{} structure $\KB$ on domain $K$ such that $\Aut(\KB)=G$. 
\end{theorem*}   

Proposition \ref{proposition orbits} is essentially the dual of the statement that every closed subgroup of $S_\infty$ is the automorphism group of a \Fraisse{} limit on domain $\mathbb{N}$. In Section  \ref{Section: Turning a structure direct to dual} we show how to turn any topological $\LC$-structure into a purely dual one without losing any information. We also show that the above theorem is false if we do not allow our structures to support a dual structure. Therefore the context of dual structures is strictly more general than the context of direct structures. 

 In Section \ref{Section: Compact spaces},  we fix a special binary  relation symbol $\mathfrak{r}$ whose interpretation will always be a reflexive and symmetric closed relation. This should be paralleled with the  metric \Fraisse{} theory where the symbol $d$ is  reserved as a signifier for the metric.  A formal relational language $\LC$ will be decorated with the subscript $\mathfrak{r}$ whenever $\mathfrak{r}$ belongs to $\LC$. Therefore, we always have  $\mathfrak{r}\in\LC_\mathfrak{r}$ and $\LC_\mathfrak{r}$-structures are, in particular,  reflexive $\mathfrak{r}$-graphs. We say that an $\LC_\mathfrak{r}$-structure $\KB$ is a pre-space if $\mathfrak{r}^{\KB}$ is moreover transitive and therefore an equivalence relation. We apply Proposition \ref{proposition orbits}  to get the following result.

\begin{theorem**}[\textbf{\ref{compactSpacesMainTheorem}}]
Let $G$ be a closed subgroup of $\Homeo{}(Y)$, for some compact metrizable space $Y$. Then there is a
projective \Fraisse{} pre-space $\KB$ such that $K/\mathfrak{r}^{\KB}$ is homeomorphic to $Y$, and the
quotient projection
\[ K \mapsto Y\]
induces a continuous group embedding $\Aut(\KB)\hookrightarrow  G$,  with dense image in $G$. 
\end{theorem**}

We shall note here that R. Camerlo characterized all possible quotients $M/\mathfrak{r}^{\MB}$ of \Fraisse{} structures in the language $\{\mathfrak{r}\}$ to be certain combinations of singletons, Cantor spaces and  pseudo-arcs \cite{Camerlo}.

 \begin{Ack}
 This set of notes includes --I think-- the first written account of \Fraisse{} theory with dual predicates. This is not however the original idea of this paper. The idea of using dual predicates was first introduced in a systematic way by S. Solecki \cite{SoleckiRamsey1, SoleckiRamsey2} in Ramsey theory to formalize dual Ramsey statements such as the Graham and Rothschild theorem \cite{Graham}. S. Solecki had known for a fact that projective  \Fraisse{} theory can be developed including dual predicates and he was the one who pointed out that dual structure might be needed for the proof of Proposition \ref{proposition orbits}. An example in Section \ref{Section: Turning a structure direct to dual} shows that for this theorem to be true dual predicates  are actually necessary. I would like to thank S\l{}awek since many of the ideas included here grew out of the discussions we had. I would also like to thank Ola Kwiatkowska for sharing with me her insight on this problem, as well as Alex Kruckman and Gianluca Basso for their useful comments.
 \end{Ack}

\section{Preliminaries}\label{Section Preliminaries}

In what follows, $K$ will always denote a zero-dimensional, compact, metrizable space. Our main objects of study will be spaces $K$ as above, which support both usual model-theoretic structure as well as dual structure. To make this precise, following S. Solecki \cite{SoleckiRamsey1,SoleckiRamsey2}, we consider the following two types of tuples. 
\begin{itemize}
\item In the classical model-theoretic context, a tuple of size $n>0$ in $K$ corresponds to an injection
\[i:\{0,\ldots,n-1\}\to K \]    

We will call this kind of tuple a \emph{direct tuple}. We denote the set of all direct tuples in $K$ of size $n>0$ by $K^{[n]}$.  

\item In the dual context, a tuple of size $n$ in $K$ corresponds to a surjection
\[e: K \to \{0,\ldots,n-1\} \]    
Since our intention is to work with topological structures, we endow $\{0,\ldots,n-1\}$ with the discrete topology and we impose a further regularity condition, that $e$ is also continuous. We will call this kind of tuple a \emph{dual tuple}. We denote the set of all dual tuples in $K$ of size $n$ by $[n]^K$.  
\end{itemize}

Notice that for $K$ zero-dimensional, compact, metrizable space, the set $[n]^K$ is at most countably infinite and moreover these functions suffice to separate points of $K$, i.e., for every $x_1,x_2\in K$ there is an $e\in[n]^K$ such that $e(x_1)\neq e(x_2)$. Notice also that the set $[n]^K$ of all dual tuples naturally corresponds to the set $\mathrm{CP}_n(K)$ of all clopen, ordered, $n$-partitions of $K$, i.e.,
\[\mathrm{CP}_n(K)=\big\{(\Delta_0,\ldots,\Delta_{n-1}) : \Delta_i\subset K\,\,\, \text{clopen},\,\,\,\Delta_i\cap\Delta_j=\emptyset\,\,\,\cup_i\Delta_i= K \big\}.\]

Whenever it is convenient for notational purposes we will not distinguish between the set $\{\Delta_0,\ldots,\Delta_{n-1}\}$ and the tuple $(\Delta_0,\ldots,\Delta_{n-1})$.  If for example $P\in\mathrm{CP}_n(K)$ and $\Delta$ is a clopen set appearing some of the $n$ entries of $P$ then we will write $\Delta\in P$.

\subsection{Topological $\LC$-structures}

We will work with relational only languages $\LC$. The structures we describe here, with additional dual function symbols, were introduced in \cite{SoleckiRamsey1,SoleckiRamsey2}. To each relational symbol $R$ in $\LC$ corresponds some natural number $\mathrm{arity}(R)>0$ which is the arity of the symbol $R$. Moreover, for every symbol in $\LC$ we have predetermined our intention to use it in the direct or in the dual context. We will make the convention here of using lower case letters $r,p,q,\ldots$ for direct relational symbols, and capital letters $R,P,Q,\ldots$ for dual relational symbols.  We will call the language $\LC$ \emph{purely direct} if it contains only direct symbols and \emph{purely dual} if it contains only dual symbols.

By a \emph{topological $\LC$-structure} $\KB$ we mean a zero-dimensional, metrizable, compact space $K$ together with appropriate interpretations for every symbol in $\LC$.
\begin{itemize}
\item If $r\in\LC$ is a direct relation symbol of arity $n$, an appropriate interpretation for $r$ is any closed subset $r^{\KB}$ of $K^{[n]}$.
\item If $R\in\LC$ is a dual relation symbol of arity $n$, an appropriate interpretation for $R$ is any subset $R^{\KB}$ of $[n]^K$, or equivalently, any subset of $\mathrm{CP}_n(K)$. 
\end{itemize}

We call a topological $\LC$-structure \emph{purely direct $\LC$-structure} whenever $\LC$ is purely direct and \emph{purely dual $\LC$-structure} whenever $\LC$ is purely dual.

\subsection{Morphisms}

Following \cite{Irwin}, we will be working with epimorphisms. The epimorphisms here will additionally preserve the dual structure. Such epimorphisms were introduced in \cite{SoleckiRamsey1,SoleckiRamsey2}.

 Let $\AB,\BB$ be two dual topological $\LC$-structure. By an \emph{epimorphism} $f$ from $\AB$ to $\BB$ we mean a continuous surjection $f:A\to B$  such that:
\begin{itemize}
\item for every $r\in\LC$ of arity say $m$ and every $\beta\in B^{[m]}$ we have
\[\beta\in r^{\BB}\quad\Longleftrightarrow\quad \exists \alpha\in r^{\AB}\quad \beta=f\circ\alpha \]

\item for every $R\in\LC$ of arity say $m$ and for every $\beta\in [m]^B$ we have
\[\beta\in R^{\BB}\quad\Longleftrightarrow\quad \beta \circ f\in R^{\AB} \]

\end{itemize}

An isomorphism between $\AB$ and $\BB$ is a bijective epimorphism and an automorphism of $\AB$ is an isomorphism from $\AB$ to $\AB$.

Let $\KB$ be a topological $\LC$-structure, let $A$ be a zero-dimensional, metrizable, compact space and let $f:K\to A$ be a continuous surjection. Notice that there is a unique topological $\LC$-structure $\AB$ on domain $A$ that renders $f$ an epimorphism. We call this structure $\AB$, the structure \emph{induced by the map} $f$.

Let $f:\KB\to \AB, h:\KB\to\BB$ be epimorphisms. We say that $f$ \emph{factors through} $h$ if there is an epimorphism $f_h:\BB\to\AB$ such that $f_h\circ h= f$.

\begin{lemma}\label{factors through lemma}
Let $\AB,\BB$ and $\KB$ be topological $\LC$-structures with $\AB,\BB$ finite. Let also $f:\KB\to\AB$ and  $g:\KB\to\BB$ be epimorphisms. Then there is a finite topological $\LC$-structure $\CB$ and an epimorphism $h:\KB\to\CB$ such that both $f$ and $g$ factor through $h$. 
\end{lemma}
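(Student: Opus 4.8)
The plan is to take $\CB$ to be the structure on the domain $C = f(K) \times_{??}$... — more precisely, to realize $C$ as a suitable "common refinement" of $A$ and $B$ sitting inside $A \times B$. Concretely, I would let $C$ be the image of the diagonal-type map $\langle f, g\rangle : K \to A \times B$, $x \mapsto (f(x), g(x))$, and let $h : K \to C$ be the corestriction of this map. Since $A$ and $B$ are finite discrete spaces, $A\times B$ is finite discrete, so $C$ is a finite discrete space and $h$ is a continuous surjection. The projections $\pi_A : A\times B \to A$ and $\pi_B : A\times B \to B$ restrict to surjections $f_h := \pi_A\!\res C : C \to A$ and $g_h := \pi_B\!\res C : C \to B$, and by construction $f_h \circ h = f$ and $g_h \circ h = g$ as maps of sets/topological spaces. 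It remains to equip $C$ with an $\LC$-structure making all the relevant maps epimorphisms.

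For the $\LC$-structure on $C$: I would simply let $\CB$ be the structure \emph{induced by the map} $h : \KB \to \CB$, which exists and is unique by the remark preceding the lemma (since $C$ is zero-dimensional, compact, metrizable — indeed finite — and $h$ is a continuous surjection). By definition this makes $h$ an epimorphism. So the only thing left to check is that the maps $f_h : \CB \to \AB$ and $g_h : \CB \to \BB$ are epimorphisms; once we know that, "$f$ factors through $h$ via $f_h$" and "$g$ factors through $h$ via $g_h$" are immediate from $f_h\circ h = f$ and $g_h \circ h = g$. (One should also double-check that these factoring maps are compatible with the definition of "factors through," but that is exactly the equation $f_h \circ h = f$.)

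The main point — and the one place a small argument is needed — is verifying that $f_h : \CB \to \AB$ is an epimorphism, i.e. that the structure induced on $C$ by $h$ pushes forward under $f_h$ to exactly the structure $\AB$ of $A$ (and similarly for $g_h$). Here I would argue relation by relation. For a direct symbol $r$ of arity $m$: given $\alpha \in A^{[m]}$, we have $\alpha \in r^{\AB}$ iff (since $f$ is an epimorphism) there is $\gamma \in r^{\KB}$ with $\alpha = f\circ\gamma = f_h\circ h\circ\gamma$; setting $\delta = h\circ\gamma$, this is an element of $r^{\CB}$ by definition of the induced structure, and conversely any $\delta\in r^{\CB}$ is of the form $h\circ\gamma$ with $\gamma\in r^{\KB}$ modulo the usual unwinding. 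Wait — one must be a little careful because the induced-structure definition says $r^{\CB}$ is obtained by the epimorphism condition for $h$, i.e. $\delta\in r^{\CB} \iff \exists\gamma\in r^{\KB}\ \delta = h\circ\gamma$; chaining this with $f_h$ gives exactly $f_h\circ\delta \in r^{\AB} \iff \exists\gamma\in r^{\KB}\ f_h\circ\delta = f\circ\gamma$, which — using surjectivity of $h$ and that $f = f_h\circ h$ — is equivalent to $f_h\circ\delta\in r^{\AB}$, so the epimorphism condition for $f_h$ holds. For a dual symbol $R$ of arity $m$: given $\beta\in [m]^A$, we chase $\beta\in R^{\AB} \iff \beta\circ f \in R^{\KB} \iff \beta\circ f_h\circ h\in R^{\KB} \iff \beta\circ f_h \in R^{\CB}$, where the last equivalence is the definition of the structure induced by $h$; this is precisely the epimorphism condition for $f_h$ on the dual symbol $R$.

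So the whole proof reduces to two essentially formal observations: first, that the fibered-product-of-finite-sets construction $C = \langle f,g\rangle(K)$ with the projections gives the desired commuting diagram of continuous surjections; and second, a routine diagram chase showing that induced structures compose well, i.e. the structure induced by $h$ then pushed along $f_h$ agrees with the structure induced by $f = f_h\circ h$. I expect no serious obstacle: the only thing to be careful about is making sure the "epimorphism" conditions (especially the $\exists$ on the direct side, which is the nontrivial direction) are verified in both directions rather than just the easy inclusion, and that $C$ being finite is used to guarantee $\CB$ is a legitimate finite topological $\LC$-structure.
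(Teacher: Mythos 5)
Your proposal is correct and is essentially the paper's own argument: the image of $\langle f,g\rangle$ in $A\times B$ is exactly the paper's clopen partition of $K$ into the nonempty sets $f^{-1}(a)\cap g^{-1}(b)$, with $h$ the quotient/inclusion map and $\CB$ the structure induced by $h$. The paper leaves the verification that $f_h$ and $g_h$ are epimorphisms as ``immediate,'' and your relation-by-relation chase (including the existential direction for direct symbols) is a correct filling-in of that step.
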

\begin{proof}
Let $C=(\Delta_0,\ldots,\Delta_{n-1})$ a clopen partition of $K$ whose every entry $\Delta_i$ is a (nonempty) set of the form $f^{-1}(a)\cap g^{-1}(b)$, where $a\in A$ and $b\in B$. Let $h:K\to C$ be the inclusion map, i.e, $h(x)=\Delta_i$ if and only if $x\in\Delta_i$. This map is a continuous surjection, so, it induces a structure $\CB$ on domain $C$. It is immediate now that both $f$ and $g$ factor through $h$.   
\end{proof}

Given a sequence $\AB_1,\AB_2,\ldots,\AB_i,\ldots$ of finite topological $\LC$-structures together with epimorhisms $\pi_i:\AB_{i+1}\to\AB_i$, we can define a new structure $\MB$ and epimorphisms $\pi^\infty_i:\MB\to\AB_i$ through an inverse limit construction. Let
\[M=\big\{(a_1,a_2,\ldots)\in\prod_{i\in\N}A_i : \,\,\,\forall\,i\geq 1\,\,\, \pi_i(a_{i+1})=a_i\big\}.\]
$M$ is a closed subset of the compact space $\prod_{i\in\N}A_i$ and it will serve as the domain of $\MB$. We define $\pi^\infty_i$ to be the projection map from $M$ to $A_i$. 

For $r\in\LC$ of arity say $m$, and $\beta\in M^{[m]}$ we let $\beta\in r^{\MB} $ if and only if $\pi^\infty_i\circ\beta\in r^{\AB_i}$ for all $i\in\N$. For $R\in\LC$ of arity say $m$, and $\gamma\in [m]^M$, notice that there is an $i_0\in\N$ such that $\gamma$ factors through $\pi^\infty_{i_0}$. Let  $\alpha\in [m]^A$ be such that $\gamma=\alpha\circ\pi^\infty_{i_0}$. We let $\gamma\in R^{\MB}$ if and only if $\alpha\in R^{\AB_{i_0}}$, which happens if and only if for every $i>i_0$ we have $(\alpha\circ\pi_{i_0}\circ\ldots\circ\pi_{i-1})\in R^{\AB_{i}}.$ 

This turns $\MB$ into a topological $\LC$-structure and every $\pi^\infty_i$ to an epimorphism. We call $\MB$ the \emph{inverse limit} of the \emph{inverse system} $\{(\AB_i,\pi_i): i\in\N\}$ and we write $\MB=\varprojlim(\AB_i,\pi_i).$

\section{Projective \Fraisse{} structures} \label{Section: projective Fraisse structures}
In Chapter $7$ of \cite{Hodges}, Hodges reviews the theory of \Fraisse{} limits of direct structures via direct morphisms (embeddings). Following Hodges and \cite{Irwin} we present here the theory of \Fraisse{} limits of topological $\LC$-structures via dual morphisms. To avoid confusion, we should emphasize two things. First, what in \cite{Irwin} is called topological $\LC$-structure, here it falls under the name purely direct topological $\LC$-structure. Secondly, in contrast with the definition that we will be using here, in \cite{Irwin} a projective \Fraisse{} class  is not bound  to satisfy the hereditary property ($\mathrm{HP}$).   

We say that a topological $\LC$-structure $\MB$ is \emph{projectively \Fraisse{}} or \emph{projectively ultra-homogeneous} if for every two epimorphisms $f_1,f_2$ of $\MB$ on some finite topological $\LC$-structure $\AB$ there is an automorphism $g$ of $\MB$ such that $f_1\circ g=f_2$.

For every topological $\LC$-structure $\MB$ we denote by $\mathrm{Age}(\MB)$ the class of all the finite topological $\LC$-structures $\AB$ such that $\MB$ epimorphs on $\AB$. We call a class $\KC$ of topological $\LC$-structures an \emph{age} if $\KC=\mathrm{Age}(\MB)$ for some topological $\LC$-structure $\MB$. It is immediate that if $\KC$ is an age, then $\KC$ is not empty, any subclass of $\KC$ of pairwise non-isomorphic structures is at most countable, and the following two properties hold for $\KC$.
\begin{itemize}
\item \emph{Hereditary Property} ($\mathrm{HP}$): if $\AB\in\KC$ and $\AB$ epimorphs onto a structure $\BB$, then $\BB\in\KC$.
\item \emph{Joint Surjecting Property} ($\mathrm{JSP}$): if $\AB,\BB\in\KC$ then there is $\CB\in\KC$ that epimorphs onto both $\AB$ and  $\BB$.
\end{itemize}

The converse is also true i.e. if $\KC$ is a non empty class of finite topological $\LC$-structures such that any subclass of $\KC$ of pairwise non-isomorphic structures is at most countable and the above two properties hold for $\KC$ then  $\KC$ is an age. 

To see this, let $\AB_1,\AB_2,\ldots$ be a list of structures in $\KC$ that up to isomorphism  exhaust $\KC$. Using the $\mathrm{JSP}$ we can find a new list $\BB_1,\BB_2,\ldots$ of structures in $\KC$ such that $\BB_1=\AB_1$ and for $i>1$, $\BB_{i+1}$ epimorphs on both $\AB_{i+1}$ and $\BB_{i}$. Let $\pi_i:\BB_{i+1}\to\BB_i$ be such epimorphisms and let $\MB=\varprojlim(\BB_i,\pi_i)$. Then $\KC=\mathrm{Age}(\MB)$ because by construction $\MB$ epimorphs to every $\AB_i$ and moreover, every epimorphism of $\MB$ to some finite dual topological $\LC$-structure $\AB$ factors through an epimorphism from some $\BB_i\in\KC$ that was used in the inverse system so, by $\mathrm{HP}$ we have that $\AB\in\KC$.

Given now that the structure $\MB$ is projectively ultrahomogeneous, it is easy to see that its age $\KC$ satisfies moreover the following property.
\begin{itemize}
\item \emph{Projective Amalgamation Property} ($\mathrm{PAP}$): if $\AB,\BB,\CB\in\KC$ and $f_A:\AB\to\CB$, $f_B:\BB\to\CB$ are epimorphisms, then there is $\DB\in\KC$ and epimorphisms $g_A:\DB\to\AB$, $g_B:\DB\to\BB$ such that $f_A\circ g_A= f_B\circ g_B$.
\end{itemize}

To check that this is true, notice first that since $\AB,\BB\in\KC$, there are epimorphisms $h_A:\MB\to\AB$ and $h_B:\MB\to\BB$. But then $f_A \circ h_A$ and $f_B \circ h_B$ are both epimorphisms from $\MB$ to $\CB$. So, by projective ultra-homogeneity of $\MB$ there is $\phi\in\Aut{}(\MB)$ such that $f_A \circ h_A\circ\phi=f_B \circ h_B$. Using Lemma \ref{factors through lemma}, we can find $\DB\in\KC$ and an epimorphism $h_D:\MB\to\DB$ such that $h_A$ and $h_B\circ\phi^{-1}$ factor through $h_D$. Let $g_A:\DB\to\AB$ and $g_B:\DB\to\BB$ be the maps that close these diagrams, i.e., $g_A\circ h_D=h_A$ and $g_B\circ h_D=h_B\circ\phi^{-1}$.  The functions $g_A$ and  $g_B$ are the required epimorphisms from $\DB$ to $\AB$ and $\BB$ in respect. 

In Theorem \ref{theorem dual Fraisse} we will see that the converse is also true, i.e., if an age $\KC$ has $\mathrm{PAP}$ then we can built from it a projective \Fraisse{} structure $\MB$ with $\mathrm{Age}(\MB)=\KC$. An age $\KC$ that satisfies $\mathrm{PAP}$ is called \emph{projective \Fraisse{} class}.

Let $\MB$ be a topological $\LC$-structure with $\mathrm{Age}(\MB)=\KC$. We say that $\MB$ has the \emph{finite extension property} if for every $\AB,\BB\in\KC$ and $f:\BB\to\AB$, $g:\MB\to\AB$ epimorphisms, there is an epimorphism $h:\MB\to\BB$ such that $f \circ h=g$. We say that $\MB$ has the \emph{one point extension property} if the above holds when the size of $B$ is one more than the size of $A$. Notice that for any topological $\LC$-structure $\MB$, $\MB$ has the one point extension property if and only if $\MB$ has the finite extension property.

\begin{lemma}\label{finite extension to isomorphism lemma}
Let $\MB$ and $\NB$ be two topological $\LC$-structure of the same age $\KC$. Let $\AB\in\KC$ and let $f:\MB\to\AB$ and $g:\NB\to\AB$ be two epimorphisms. If $\MB$ and $\NB$ have the finite extension property then there is an isomorphism $h:\MB\to\NB$ such that $g\circ h= f$. 
\end{lemma}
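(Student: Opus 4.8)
The plan is to run the standard back-and-forth argument, but using the finite extension property in place of ultrahomogeneity, and taking care that all morphisms involved are epimorphisms of topological $\LC$-structures. We build $h$ as an inverse limit of a coherent tower of epimorphisms onto finite structures in $\KC$. First I would fix a cofinal sequence of finite quotients witnessing the topology on $M$ and on $N$: since $M$ is compact, zero-dimensional, metrizable, there is a sequence of epimorphisms $f_n\colon\MB\to\AB_n$ with $\AB_n\in\KC$ such that the maps $(f_n)$ separate points of $M$, i.e. $\MB=\varprojlim(\AB_n,\pi_n)$ for the induced bonding maps $\pi_n$; similarly $\NB=\varprojlim(\BB_n,\sigma_n)$ for epimorphisms $g_n\colon\NB\to\BB_n$. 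We may assume $\AB_0=\BB_0=\AB$ and $f_0=f$, $g_0=g$.

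Next I would construct, by induction on $n$, finite structures $\CB_n\in\KC$, epimorphisms $p_n\colon\MB\to\CB_n$, $q_n\colon\NB\to\CB_n$, and bonding epimorphisms $\CB_{n+1}\to\CB_n$ commuting with $p_n,q_n$, such that additionally $f_n$ factors through $p_{2n}$ (say) and $g_n$ factors through $q_{2n+1}$, with $\CB_0=\AB$, $p_0=f$, $q_0=g$. At the even step: given $p_{2n}\colon\MB\to\CB_{2n}$, use Lemma \ref{factors through lemma} on $\MB$ to get a common refinement $\DB$ of $p_{2n}$ and $f_{n}$; then on the $\NB$-side we have the epimorphism $q_{2n}\colon\NB\to\CB_{2n}$ and the epimorphism $\DB\to\CB_{2n}$, and the finite extension property of $\NB$ supplies an epimorphism $\NB\to\DB$ over $\CB_{2n}$. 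Set $\CB_{2n+1}=\DB$. The odd step is symmetric, swapping the roles of $\MB$ and $\NB$ and invoking the finite extension property of $\MB$. In the limit we obtain $\CB_\infty=\varprojlim\CB_n$ together with epimorphisms $\MB\to\CB_\infty$ and $\NB\to\CB_\infty$. Because $f_n$ factors through some $p_m$ for every $n$ and the $(f_n)$ separate points of $M$, the map $\MB\to\CB_\infty$ is injective, hence an isomorphism; symmetrically $\NB\to\CB_\infty$ is an isomorphism. Composing gives an isomorphism $h\colon\MB\to\NB$, and since $p_0=f$ and $q_0=g$ factor through $\CB_\infty$ in the same way, $g\circ h=f$.

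There are two routine points to verify. One is that the inverse limit of finite structures along the tower is a genuine topological $\LC$-structure with the limiting maps epimorphisms, which is exactly the inverse-limit construction recalled just before Section \ref{Section: projective Fraisse structures}; in particular one must check that dual predicates behave well, but every dual tuple in the limit factors through some finite stage, so this reduces to the finite case. The other is that ``factors through'' is used coherently so that the square $g\circ h=f$ really commutes, which is immediate from tracking the factorizations at stage $0$.

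The main obstacle I anticipate is bookkeeping rather than mathematics: one must interleave the two factorization requirements (that every $f_n$ eventually factors through some $p_m$, and every $g_n$ through some $q_m$) while simultaneously maintaining the commuting tower of epimorphisms, and one must make sure that each appeal to the finite extension property is set up with the correct pair of epimorphisms into a common finite structure. Once the tower is built, injectivity of the two limit maps — the crux of ``back-and-forth gives isomorphism'' — follows purely from the separation property of the chosen cofinal sequences, with no further appeal to $\mathrm{PAP}$ or homogeneity.
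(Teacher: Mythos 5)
Your proposal is correct and follows essentially the same back-and-forth scheme as the paper: alternately refine on one side via Lemma \ref{factors through lemma}, lift on the other side via the finite extension property, pass to the inverse limit of the resulting tower, and compose the two limit isomorphisms to get $h$ with $g\circ h=f$. The only cosmetic difference is that you secure point-separation by folding in fixed presentations of $M$ and $N$ as inverse limits of finite quotients, whereas the paper interleaves enumerations of all dual tuples of $M$ and of $N$; these are equivalent devices.
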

\begin{proof}
 
We will use a back and forth type of argument. For every $n\in\N$ we will construct $\AB_n\in\KC$ and epimorphisms $f_n:\MB\to \AB_n$, $g_n:\NB\to \AB_n$, and for every $n>0$ we will also construct an epimorphism $\pi_{n-1}:\AB_n\to \AB_{n-1}$.  At the end of the construction $\MB$ and $\NB$ will be proven to be isomorphic to $\varprojlim(\AB_n,\pi_n)$. By using these indirect isomorphisms we will get the desired isomorphism $h$.  Let $\{e_n: n\in\N\}$ be an enumeration of dual tuples $[m]^M$ of $M$ for every $m>0$ and let $\{e'_n: n\in\N\}$ be an enumeration of dual tuples $[m]^N$ of $N$ for every $m>0$.

$\mathit{n=0}.$ Let $\AB_0=\AB$, $f_0=f$ and  $g_0=g$.

\emph{odd} $\mathit{n>0}$. Using Lemma \ref{factors through lemma} we can find a structure $\AB_n$ and an epimorphism $f_n:\MB\to\AB_n$ such that both $f_{n-1}$ and $e_{n-1}$ factor through $f_n$. Let $\pi_{n-1}:\AB_n\to\AB_{n-1}$ be the epimorphism that closes the one diagram, i.e., $\pi_{n-1}\circ f_{n}=f_{n-1}$. Finally define $g_n:\NB\to\AB_n$ to be any map such that $\pi_{n-1}\circ g_n=g_{n-1}$. A map like this exists, since $\NB$ satisfies the finite extension property.   

\emph{even} $\mathit{n>0}$. Again, using Lemma \ref{factors through lemma} we can find a structure $\AB_n$ and an epimorphism $g_n:\NB\to\AB_n$ such that both $g_{n-1}$ and $e'_{n-1}$ factor through $g_n$. Let $\pi_{n-1}:\AB_n\to\AB_{n-1}$ be the epimorphism that closes the one diagram, i.e., $\pi_{n-1}\circ g_{n}=g_{n-1}$. Finally define $f_n:\MB\to\AB_n$ to be any map such that $\pi_{n-1}\circ f_n=f_{n-1}$. A map like this exists, since $\MB$ satisfies the finite extension property.   

Let now $\BB=\varprojlim(\AB_n,\pi_n)$. The maps $\mu:\MB\to\BB$ with $\mu(x)=(f_0(x),f_1(x),\ldots)$ and $\nu:\NB\to\BB$ with $\nu(x)=(g_0(x),g_1(x),\ldots)$ are bijections since the families $\{e_n\}$ and $\{e'_n\}$ separate points of $M$ and $N$ in respect. It is moreover easy to see that $\mu$ and $\nu$ are actually isomorphisms. So, the map $h:\MB\to\NB$ with $h=\nu^{-1}\circ\mu$ is also an isomorphism which by construction satisfies the desired property  $g\circ h= f$.
\end{proof}

\begin{lemma}\label{1 point=fraisse}
Let $\MB$ be a topological $\LC$-structure with $\mathrm{Age}(\MB)=\KC$ then the following are equivalent:
\begin{enumerate}
\item $\MB$ is projectively ultrahomogeneous;
\item $\MB$ has the finite extension property;
\item $\MB$ has the one point extension property.
\end{enumerate}
\end{lemma}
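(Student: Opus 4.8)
The plan is to prove the cycle of implications $(1)\Rightarrow(2)\Rightarrow(3)\Rightarrow(1)$. Two of these are already essentially in hand: the implication $(2)\Leftrightarrow(3)$ is the remark made just above the statement (one-point extension iterates to finite extension by induction on the size difference $|B|-|A|$, factoring any epimorphism $f:\BB\to\AB$ through a tower of one-point epimorphisms using a chain of intermediate clopen refinements), so I would either cite that remark or spell out the easy induction. Thus the real content is $(1)\Rightarrow(2)$ and $(3)\Rightarrow(1)$, and neither is hard given the lemmas already available.

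For $(1)\Rightarrow(2)$: assume $\MB$ is projectively ultrahomogeneous, and let $\AB,\BB\in\KC$ with epimorphisms $f:\BB\to\AB$ and $g:\MB\to\AB$. Since $\BB\in\KC=\mathrm{Age}(\MB)$, there is an epimorphism $h_0:\MB\to\BB$; then $f\circ h_0:\MB\to\AB$ is an epimorphism, as is $g:\MB\to\AB$. By projective ultrahomogeneity there is $\phi\in\Aut(\MB)$ with $(f\circ h_0)\circ\phi = g$. Set $h = h_0\circ\phi$; then $h:\MB\to\BB$ is an epimorphism (composition of an epimorphism with an automorphism) and $f\circ h = g$, which is exactly the finite extension property. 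This argument is parallel to the verification of $\mathrm{PAP}$ given in the text.

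For $(3)\Rightarrow(1)$: assume $\MB$ has the one-point extension property, hence (by $(3)\Rightarrow(2)$) the finite extension property. Let $f_1,f_2:\MB\to\AB$ be two epimorphisms onto a finite $\AB\in\KC$. I want an automorphism $g$ of $\MB$ with $f_1\circ g = f_2$. This is precisely the situation of Lemma \ref{finite extension to isomorphism lemma} applied with $\NB=\MB$: both copies have the same age $\KC$ and the finite extension property, and we are given the two epimorphisms onto the common finite structure $\AB$ (playing the roles of $f$ and $g$ there). That lemma produces an isomorphism $h:\MB\to\MB$, i.e.\ an automorphism, with $f_2\circ h = f_1$; taking $g=h$ gives projective ultrahomogeneity.

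I do not expect a serious obstacle here: the lemma doing the heavy lifting (the back-and-forth of Lemma \ref{finite extension to isomorphism lemma}) has already been proved, and the only mild care needed is to make sure compositions of epimorphisms with automorphisms are again epimorphisms (immediate from the definitions) and to handle the $(3)\Rightarrow(2)$ step cleanly — there one refines $\BB$ through $\AB$ by a chain of clopen partitions each splitting one fiber of the previous map in two, invoking the one-point property along the chain and composing. The proof is then a short assembly of these pieces.
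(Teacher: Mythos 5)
Your proposal is correct and follows essentially the same route as the paper: for $(1)\Rightarrow(2)$ you pick an epimorphism $\MB\to\BB$ from the age and correct it by an automorphism supplied by projective ultrahomogeneity, and for the converse you apply Lemma \ref{finite extension to isomorphism lemma} with $\NB=\MB$, exactly as in the text, with the $(2)\Leftrightarrow(3)$ step treated as the immediate remark preceding the lemma. The only differences are cosmetic (a cycle of implications instead of two biconditionals, and obtaining $f_2\circ h=f_1$ rather than $f_1\circ g=f_2$, which is the same statement after relabeling or inverting the automorphism).
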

\begin{proof}
It is immediate that (2) and (3) are equivalent. We prove that (1) is also equivalent to (2). 
\item[$(1)\to(2)$] Let $\AB,\BB\in\KC$ and $f:\BB\to\AB$, $g:\MB\to\AB$ epimorphisms. Since $\BB\in\KC=\mathrm{Age}(\MB)$, there is an epimorphism $j:\MB\to\BB$. So, $f\circ j:\MB\to\AB$ is an epimorphism, and by the projective ultra-homogeneity of $\MB$ there is $\phi\in\Aut{}(\MB)$ with $g\circ\phi=f\circ j$. Let $h=j\circ\phi^{-1}$. Then $h:\MB\to\BB$ is an epimorphism such that $f \circ h=g$. 
\item[$(2)\to(1)$]
Let $f_1,f_2:\MB\to\AB$ be epimorphisms for some $\AB\in\KC$. Then by Lemma \ref{finite extension to isomorphism lemma}, there is $g\in\Aut{}(\MB)$ such that $f_1\circ g=\ f_2$.
\end{proof}

\begin{theorem}\label{theorem dual Fraisse}
For every projective \Fraisse{} class $\KC$ there is a unique, up to isomorphism, projectively ultra-homogeneous topological $\LC$-structure $\MB$ such that $\mathrm{Age}(\MB)=\KC$.
\end{theorem}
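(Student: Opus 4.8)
The plan has two halves. Uniqueness is essentially free: if $\MB$ and $\NB$ are both projectively ultra-homogeneous with $\mathrm{Age}(\MB)=\mathrm{Age}(\NB)=\KC$, then by Lemma~\ref{1 point=fraisse} both enjoy the finite extension property; fixing any $\AB\in\KC$ (the class is nonempty) and epimorphisms $f:\MB\to\AB$, $g:\NB\to\AB$, which exist because $\AB$ lies in the common age, Lemma~\ref{finite extension to isomorphism lemma} hands us an isomorphism $h:\MB\to\NB$. So all the work is in existence.

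For existence I would build a ``generic'' inverse sequence. Start from any $\BB_0\in\KC$ and recursively construct $\BB_i\in\KC$ together with epimorphisms $\pi_i:\BB_{i+1}\to\BB_i$; write $\pi_{i,j}=\pi_i\circ\cdots\circ\pi_{j-1}:\BB_j\to\BB_i$ for $i\le j$, and at the end put $\MB=\varprojlim(\BB_i,\pi_i)$ with limiting epimorphisms $\pi^\infty_i:\MB\to\BB_i$. The construction must arrange:
\begin{itemize}
\item (absorbing $\mathrm{JSP}$) every $\CB\in\KC$ receives an epimorphism from some $\BB_i$;
\item (absorbing $\mathrm{PAP}$) whenever $\phi:\BB_i\to\CB$ and $\psi:\DB\to\CB$ are epimorphisms with $\CB,\DB\in\KC$, there are $j\ge i$ and an epimorphism $\rho:\BB_j\to\DB$ with $\psi\circ\rho=\phi\circ\pi_{i,j}$.
\end{itemize}
A $\mathrm{JSP}$ demand at stage $n$ is met in one step: find $\CB'\in\KC$ epimorphing onto both $\BB_n$ and $\CB$ and set $\BB_{n+1}=\CB'$. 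A $\mathrm{PAP}$ demand with parameter $i$ is met at a stage $n\ge i$ by applying $\mathrm{PAP}$ to $\phi\circ\pi_{i,n}:\BB_n\to\CB$ and $\psi:\DB\to\CB$, taking $\BB_{n+1}$ to be the amalgam, $\pi_n$ its leg over $\BB_n$, and $\rho$ its leg over $\DB$. Since $\KC$ has only countably many isomorphism types, a finite structure has only finitely many quotients (hence, up to isomorphism of the target, only finitely many epimorphisms out of it), and a finite structure maps onto a fixed $\CB$ in only finitely many ways, there are only countably many demands; the usual dovetailing, scheduling a demand with parameter $i$ only after $\BB_i$ has appeared, discharges them all.

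It remains to verify $\mathrm{Age}(\MB)=\KC$ and the finite extension property, and for both I would first isolate the standard lemma: \emph{every epimorphism $g$ from $\MB$ onto a finite $\LC$-structure $\AB$ has the form $g=\phi\circ\pi^\infty_i$ for some $i$ and some epimorphism $\phi:\BB_i\to\AB$.} Granting this, $\mathrm{Age}(\MB)\subseteq\KC$ by $\mathrm{HP}$, while $\supseteq$ is the first demand; and for the finite extension property, given $\AB,\BB\in\KC$ and epimorphisms $f:\BB\to\AB$, $g:\MB\to\AB$, write $g=\phi\circ\pi^\infty_i$, apply the second demand to $\phi$ and $\psi:=f$ (with $\CB:=\AB$, $\DB:=\BB$) to get $j\ge i$ and $\rho:\BB_j\to\BB$ with $f\circ\rho=\phi\circ\pi_{i,j}$, and set $h:=\rho\circ\pi^\infty_j:\MB\to\BB$; then $f\circ h=\phi\circ\pi_{i,j}\circ\pi^\infty_j=\phi\circ\pi^\infty_i=g$. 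By Lemma~\ref{1 point=fraisse}, $\MB$ is projectively ultra-homogeneous, and it is unique by the first paragraph.

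The one place where care is needed is that isolated factoring lemma. For the underlying continuous surjection $M\to A$ it is routine: each $g^{-1}(a)$ is clopen, hence determined by finitely many coordinates of the inverse limit, so $g$ factors through $\pi^\infty_i$ for large $i$. The content is to push $i$ far enough that the induced continuous map $\BB_i\to\AB$ is actually an \emph{epimorphism of $\LC$-structures} --- here one must account for the dual relation symbols, whose membership in $\MB$ is governed by an eventual-stabilization condition along the tower, as well as the familiar injectivity bookkeeping attached to the direct symbols. Everything else is the classical back-and-forth and dovetailing machinery, already packaged in Lemmas~\ref{factors through lemma}, \ref{finite extension to isomorphism lemma} and~\ref{1 point=fraisse}.
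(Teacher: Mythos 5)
Your proposal is correct, and in fact it proves more than the paper writes down. The paper's proof of Theorem \ref{theorem dual Fraisse} consists solely of your first paragraph: uniqueness via Lemma \ref{1 point=fraisse} (ultra-homogeneity yields the finite extension property) and Lemma \ref{finite extension to isomorphism lemma} applied to epimorphisms of $\MB_1,\MB_2$ onto a common $\AB\in\KC$. Existence is only announced in Section \ref{Section: projective Fraisse structures} and never argued there: the chain construction given before the theorem absorbs only the $\mathrm{JSP}$ and shows that a class with $\mathrm{HP}$ and $\mathrm{JSP}$ is an age; it does not invoke $\mathrm{PAP}$ and does not produce an ultra-homogeneous limit. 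Your generic inverse sequence --- dovetailing the countably many $\mathrm{PAP}$-demands, the factoring lemma that every epimorphism of $\MB$ onto a finite structure has the form $\phi\circ\pi^\infty_i$, and then Lemma \ref{1 point=fraisse} --- is exactly the missing (standard, Irwin--Solecki style) argument, so your route and the paper's coincide where the paper gives a proof, and you supply the half it leaves implicit. One remark on the step you flag as delicate: no ``pushing $i$ further'' is actually needed. For a dual symbol $R$, the paper's definition of $R^{\MB}$ in an inverse limit says precisely that a dual tuple lies in $R^{\MB}$ if and only if its factorization through any stage lies in $R^{\BB_i}$, so the induced map $\BB_i\to\AB$ preserves dual relations already at the first stage through which the underlying surjection factors; for a direct symbol $r$, one direction is immediate from the definition of $r^{\MB}$ and the other uses that each $\pi^\infty_i$ is itself an epimorphism (the compactness fact the paper asserts when forming inverse limits). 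Finally, the relabeling needed to pass from demands indexed by isomorphism representatives to arbitrary $\AB,\BB\in\KC$ and arbitrary $f:\BB\to\AB$ in the finite extension property is routine, as you implicitly assume.
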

\begin{proof}
First notice that is $\MB_1,\MB_2$ share the same age and are both projectively ultra-homogeneous, by Lemma \ref{1 point=fraisse} they have finite extension property. Let $\AB$ any structure in $\KC$. Since $\KC$ is the age of both $\MB_1,\MB_2$, there are epimorphisms $f_1:\MB_1\to\AB$ and $f_2:\MB_2\to\AB$. Lemma \ref{finite extension to isomorphism lemma} gives as then an isomorphism $h$ between $\MB_1$ and $\MB_2$. 
\end{proof}

\section{Closed subgroups of $\Homeo{}(K)$}\label{Section: Closed subgroups of homeomorphism groups}

By definition and since $K$ is compact, every automorphism of a topological $\LC$-structure $\KB$ is also a homeomorphism, therefore, $\Aut{}(\KB)$ can be seen as a subgroup of $\Homeo{}(K)$. We will view $\Homeo(K)$ as a topological group equipped with the compact-open topology $\tau_{\mathrm{co}}$. The collection of the sets 
\[V(F,U)=\{g\in\Homeo(K): g(F)\subset U\},\]
where $F$ is a compact subset of $K$ and $U$ is an open subset of $K$, provide a subbase for $\tau_{\mathrm{co}}$. In this topology the group $\Aut{}(\KB)$ of automorphisms of a dual topological $\LC$-structure $\KB$ is a closed subgroup of $\Homeo(K)$. To check this, let $g\not\in\Aut{}(\KB)$. We will find an open neighborhood $V_g$ of $g$ in $\Homeo(K)$ which does not intersect $\Aut{}(\KB)$. Since $g\not\in\Aut{}(\KB)$, one of the following holds:
\begin{enumerate}
\item there is $R\in\LC$ of arity say $m$ and a dual tuple $e\in [m]^K$ such that
$\KB\models R(e)$ if and only if $\KB\not\models R(e\circ g^{-1})$, or
\item there is $r\in\LC$ of arity say $m$ and a tuple $i\in K^{[m]}$ such that
$\KB\models r(i)$ but $\MB\not\models r(g\circ i)$, or
\item there is $r\in\LC$ of arity say $m$ and a tuple $i\in K^{[m]}$ such that
$\KB\not\models r(i)$ but $\MB\models r(g\circ i)$.
\end{enumerate}

In the first case notice that if $g\not\in\Aut{}(\KB)$ then there is $R\in\LC$ of arity say $m$ and  
But then $V\big(e^{-1}(0),g\circ e^{-1}(0) \big)\cap\ldots\cap V\big(e^{-1}(m-1),g\circ e^{-1}(m-1)\big)$ is an open subset of $\Homeo(K)$ containing $g$ and lying entirely out of $\Aut{}(\KB)$.

In the second case, because $r^{\MB}$ is closed, we can find an open rectangle $U_0\times\ldots\times U_{m-1}$ around $\big((g\circ i)(0),\ldots,(g\circ i)(m-1)\big)$ which does not intersect $r^{\MB}$. Therefore, let $V_g=V\big(\{i(0)\},U_0\big)\cap\ldots\cap V\big(\{i(m-1)\},U_{m-1}\big)$.

For the last case, notice that if we let $(b_0,\ldots,b_{m-1})=\big((g\circ i)(0),\ldots, (g\circ i)(m-1)\big)$, then, as in the previous case we can find open neighborhood $V_{g^{-1}}$ of $g^{-1}$ such that for every $f\in V_{g^{-1}}$, $f(b_0,\ldots,b_{m-1})\not\in r^{\MB}$. Let then $V_g=V_{g^{-1}}^{-1}=\{f^{-1}: f\in V_{g^{-1}}\}$. Using the continuity of the inversion operator $f\to f^{-1}$ in $\tau_{\mathrm{co}}$ we have that $V_g$ is open and moreover $g\in V_g\subset\Aut{}(\KB)^c$.

The following proposition says that the inverse of the above observation is true, i.e., for every closed subgroup $G$ of $\Homeo(K)$ there is topological $\LC$-structure $\KB$ on $K$ such that $G=\Aut{}(\KB)$. Moreover, $\KB$ can be taken to be purely dual and projectively ultra-homogeneous.

\begin{proposition}\label{proposition orbits}
Let $G$ be a closed subgroup of $\Homeo(K)$. Then there is a purely dual projective \Fraisse{} structure $\KB$ on domain $K$ such that $\Aut(\KB)=G$. 
\end{proposition}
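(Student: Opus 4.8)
The plan is to build $\KB$ directly as the purely dual topological $\LC$-structure whose dual relations are exactly the $G$-orbits on clopen partitions, and then to verify that the resulting structure is projectively \Fraisse{} with automorphism group exactly $G$. Concretely, for each $n$ let $[n]^K$ act on the left by $G$ via $e\mapsto e\circ g^{-1}$ (equivalently, $G$ permutes $\mathrm{CP}_n(K)$ by moving the clopen pieces), and introduce for each $G$-orbit $\mathcal{O}\subseteq[n]^K$ a dual relation symbol $R_{\mathcal{O}}$ of arity $n$, with interpretation $R_{\mathcal{O}}^{\KB}=\mathcal{O}$. Since $K$ is zero-dimensional compact metrizable, $[n]^K$ is countable, so $\LC$ is a countable relational language; and $\KB$ is a genuine topological $\LC$-structure since any subset of $[n]^K$ is an admissible interpretation of a dual $n$-ary symbol. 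First I would check that this language separates the dual structure finely enough: the $R_{\mathcal O}$ tautologically partition each $[n]^K$ into $G$-orbits.

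Next I would identify $\Aut(\KB)$. By the discussion just before the proposition, $\Aut(\KB)$ is a closed subgroup of $\Homeo(K)$, and clearly $G\subseteq\Aut(\KB)$: if $g\in G$ then $g$ sends each orbit $\mathcal{O}$ to itself, which is exactly the condition $\beta\in R_{\mathcal O}^{\KB}\iff \beta\circ g\in R_{\mathcal O}^{\KB}$ for all $\beta\in[n]^K$. Conversely, suppose $g\in\Aut(\KB)$ and $g\notin G$. Because $G$ is closed and $[n]^K$ (for all $n$) separates points, one can find some $n$ and some $e\in[n]^K$ with $e\circ g^{-1}$ lying in a different $G$-orbit than $e$ --- otherwise $g$ would lie in the closure of $G$ in the topology generated by the sets $\{h:h\circ e^{-1}\text{ agrees with a fixed partition}\}$, which for clopen partitions is precisely $\tau_{\mathrm{co}}$, forcing $g\in\bar G=G$. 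For that $e$ and the orbit $\mathcal{O}=G\cdot e$ we get $e\in R_{\mathcal O}^{\KB}$ but $e\circ g^{-1}\notin R_{\mathcal O}^{\KB}$, contradicting that $g$ is an automorphism. Hence $\Aut(\KB)=G$. The main obstacle is this converse direction: making rigorous the claim that the $G$-orbit structure on all finite clopen partitions pins down $G$ inside $\Homeo(K)$, i.e. that two homeomorphisms inducing the same permutation of every $\mathrm{CP}_n(K)$-orbit differ by an element of $G$; this is where zero-dimensionality (clopen partitions separate points and generate the topology) and closedness of $G$ are both essential, and it is the dual analogue of the $S_\infty$ fact that a permutation preserving all orbits of a closed subgroup lies in that subgroup.

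It remains to see that $\KB$ is projectively ultra-homogeneous. By Lemma \ref{1 point=fraisse} it suffices to verify the one point (equivalently finite) extension property for $\KB$. So let $\AB,\BB\in\mathrm{Age}(\KB)$ with an epimorphism $f:\BB\to\AB$ and an epimorphism $g:\KB\to\AB$; I must produce $h:\KB\to\BB$ with $f\circ h=g$. Since $\AB,\BB$ are finite and the epimorphisms are determined (via the ``induced structure'' construction) by the underlying clopen partitions, $g$ corresponds to a clopen partition $P_A\in\mathrm{CP}_{|A|}(K)$ and $\BB$ arises as the induced structure of some clopen partition $P_B\in\mathrm{CP}_{|B|}(K)$ refining its own $\AB$; the map $f$ records how $P_B$ refines its $\AB$-partition. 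The content to check is that because $\AB=$ induced structure of $P_A$ and $\AB=$ induced structure of $f$'s domain partition are isomorphic $\LC$-structures --- and isomorphism of induced finite dual structures is witnessed exactly by the $G$-orbit data, since each $R_{\mathcal O}$ is a single $G$-orbit --- there is $\gamma\in G$ carrying the $\AB$-partition underlying $\BB$ onto $P_A$; then $\gamma$ transports $P_B$ to a clopen partition $Q_B$ of $K$ refining $P_A$, and the inclusion map $h:K\to Q_B$ is the required epimorphism onto (an isomorphic copy of) $\BB$ with $f\circ h=g$, because $\gamma\in G=\Aut(\KB)$ preserves all dual relations. Thus one should first set up the precise dictionary between epimorphisms $\KB\to\AB$ and clopen partitions together with their $G$-orbit invariants, then invoke transitivity of $G$ on each orbit to realize the abstract amalgam geometrically inside $K$; combined with the computation $\Aut(\KB)=G$ and Theorem \ref{theorem dual Fraisse}, this proves the proposition.
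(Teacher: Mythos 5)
Your construction and verification are essentially identical to the paper's: one dual relation per $G$-orbit on $[n]^K$, the inclusion $\Aut(\KB)\subseteq G$ obtained by approximating an automorphism in the compact-open topology through its action on clopen partitions and invoking closedness of $G$, and homogeneity from the fact that two epimorphisms onto the same finite structure satisfy the same orbit relation and hence differ by an element of $G$. The only cosmetic difference is that you route ultra-homogeneity through the finite extension property of Lemma \ref{1 point=fraisse} instead of checking the definition directly, but the key step is the same.
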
   
\begin{proof}

For every $n>0$, the group $G$ acts on $[n]^K$ in a natural way: for $g\in G$ and $e\in [n]^K$ let 
\[g\cdot e \,\vcentcolon=\, e\circ g^{-1}.\]
We denote this action by $G\acts [n]^K$. Notice that this action corresponds to the following action $G\acts\mathrm{CP}_n$ of $G$ on $\mathrm{CP}_n$: for $g\in G$ and $P=(\Delta_0,\ldots,\Delta_{n-1})\in\mathrm{CP}_n(K)$ let
\[g\cdot P \,\vcentcolon=\, \big(g(\Delta_0),\ldots,g(\Delta_{n-1})\big).\]
For each $n>0$ let $(\mathcal{O}^n_i: i\in I_n)$ be the collection of all orbits of $G\acts [n]^K$.

Consider now the language $\mathcal{L}=\bigcup_{i=1}^\infty\mathcal{L}^n$, where $\mathcal{L}^n$ in the language that consists of $n$-ary relational symbols $\{O^n_i: i\in I_n\}$, one for every orbit  $\mathcal{O}^n_i$.
We turn $K$ into a topological $\LC$-structure $\KB$. For $e\in [m]^K$ we let 
\[\KB\models O^m_i(e)\quad\text{if and only if}\quad e\in\mathcal{O}^m_i.\]
It is immediate that $G\subseteq\Aut(\KB)$.  We work now towards the converse inclusion.

Let $g\in\Aut{}(\KB)$ and let $V(F,U)$ be an open neighborhood of $g$ in $\Homeo{}(K)$. We can assume that $U\neq K$.  We will find $h\in G\cap V(F,U)$ which will prove that $G\supseteq\Aut(\KB)$. Because $g(F)$ is compact and $U$ is a union of clopen sets, $g(F)$ can be covered with finitely many of them, so we can assume without loss of generality that $U$ is clopen and $U\neq K$. Notice that $g\in V\big(g^{-1}(U),U\big)\subset V(F,U)$. Consider the following two dual tuples $e_1,e_2\in 2^K$, with $e_1^{-1}(\{0\})=g^{-1}(U)$, $e_1^{-1}(\{1\})= K\setminus g^{-1}(U)$ and $e_2^{-1}(\{0\})= U$, $e_2^{-1}(\{1\})= K\setminus U$. Since $g$ is an automorphism of $\KB$ and since $e_1=g\cdot e_2$, we have that $e_1$ and $e_2$ lie in the same orbit $\mathcal{O}^2_i$ for some $i\in I_2$. Therefore, there is an $h\in G$ that sends $g^{-1}(U)$ into $U$ and therefore $h\in G\cap V(F,U)$, which proves that $G=\Aut{}(\KB)$.

We prove now that $\KB$ is projectively \Fraisse{}. First notice that for every dual tuple $e\in [m]^K$, there is a unique $i\in I_m$ such that $\KB\models O^m_i(e)$.  Let $\CB\in\KB$ and let $f_1,f_2$ be two epimorphisms of $\KB$ onto $\CB$. We can assume without the loss of generality that $C=\{0,\ldots,m-1\}$ for some $m>0$ and therefore $f_1,f_2\in [m]^K$. Because $f_1$ and $f_2$ induce the same structure $\CB$, there is a unique $i\in I_m$ such that $\KB\models O^m_i(f_1)$ and $\KB\models O^m_i(f_2)$. Therefore, $f_1$ and $f_2$ lie in the same orbit $G\acts [m]^K$, so there is $g\in\Aut{}(\KB)$  such that $f_1\circ g=f_2$, showing that $\KB$ is projectively ultra-homogeneous. 
\end{proof}

\section{Turning a structure to a purely dual one}\label{Section: Turning a structure direct to dual}

Here we show that it is always possible to translate the direct structure into a dual one without losing any information. We provide a counterexample to show that the converse is not always possible. Although purely dual structures are sufficient for the development of the general theory, in Section \ref{Section: Compact spaces} it will be convenient to make use of direct relations. Moreover, there are many examples of structures whose most natural presentation would involve both direct and dual structure.

Let $\LC$ be a language and $\MB$ a topological $\LC$-structure. Let also $s\in\LC$ be a direct relation of arity $n$. For every $k$ with $0<k\leq n$ and for every $f\in [k]^n$ ($f$ is therefore a surjection), we introduce a dual relational symbol $R_s^f$ of arity $k+1$. Let \[\LC_{\not{s}}=\LC\cup\{R_s^f:\, f\in [k]^n \,\,\,\mbox{for some}\,\,\, 0<k\leq n\}\setminus\{s\}.\]
We turn now $\MB$ into an $\LC_{\not{s}}$-structure $\MB_{\not{s}}$ on the same domain $M$. We encode $s^{\MB}$ using the new dual symbols as follows: for $f\in [k]^n$  we let
$\MB_{\not{s}}\models R_s^f(\Delta_0,\ldots,\Delta_{k})$,
if and only if there are $a_0,\ldots,a_{n-1}\in M$ such that    
\[\MB\models s(a_0,\ldots,a_{n-1})\,\,\,\mbox{and}\,\,\, a_i\in\Delta_{f(i)}\,\,\,\mbox{for every}\,\,\,i\in n.\]  

It can easily be checked that $\Aut{}(\MB)$ can be fully recovered from $\Aut{}(\MB_{\not{s}})$, that $\MB$ is projectively \Fraisse{} if and only if $\MB_{\not{s}}$ is, and that  $\Aut{}(\MB)$ and $\Aut{}(\MB_{\not{s}})$ are equal as permutation groups on $M$.  

There are cases of topological $\LC$-structures which can be turned into purely direct structures. However, this is not the case always. The main observation is that if $r$ is direct relation of arity $k$ which belongs to $\LC$  and $\MB$ is a topological $\LC$-structure then $r^{\MB}$ is a set-wise invariant closed subset of $M^k$. Let now $K=2^\N$ and let $\mu$ be the uniform probability measure on $2^\N$. The group $\Aut{}(K,\mu)$ of all continuous measure preserving bijections can be easily seen to be a closed proper subgroup of $\Homeo(K)$ which for every $n>0$ leaves no proper subset of $K^{[n]}$ invariant. 
Therefore, the canonical \Fraisse{} structure given by an application of Proposition \ref{proposition orbits} on $\Aut{}(K,\mu)$ cannot be turned into a purely direct one.

\section{Compact Polish spaces as quotients of dual \Fraisse{} structures}\label{Section: Compact spaces}

We fix a special binary relation symbol $\mathfrak{r}$ whose interpretation will always be a reflexive and symmetric closed relation. A formal relational language $\LC$ will be decorated with the subscript $\mathfrak{r}$ whenever $\mathfrak{r}\in\LC_\mathfrak{r}$. Therefore, an $\LC_\mathfrak{r}$-structure is always going to be a reflexive $\mathfrak{r}$-graph perhaps with some extra structure. We say that an $\LC_\mathfrak{r}$-structure $\KB$ is a pre-space if $\mathfrak{r}^{\KB}$ is moreover transitive and therefore an equivalence relation.

As we noted in the introduction, T. Irwin and S. Solecki used purely direct \Fraisse{} theory to express the pseudo-arc $P$ as a quotient of a projective \Fraisse{} $\{\mathfrak{r}\}$-structure $\mathbb{P}$ via $\mathfrak{r}^{\mathbb{P}}$. Moreover, through their construction, the group $\Aut{}(\mathbb{P})$ naturally embedded in $\Homeo{}(P)$ as a dense subgroup. In \cite{Camerlo}, R. Camerlo characterized all different projective \Fraisse{} classes\footnote{He allows \Fraisse{} classes to lack hereditary property.} of $\{\mathfrak{r}\}$-structures. Their limits are pre-spaces with quotients $M/\mathfrak{r}^{\MB}$ which vary between certain combinations of singletons, Cantor spaces and  pseudo-arcs \cite{Camerlo}. In \cite{Lelek}, D. Barto\v{s}ov\'{a} and A. Kwiatkowska express the Lelek fan $L$ as the quotient of a the projective \Fraisse{} limit $\mathbb{L}$ of a certain class of directed graphs. Their limit $\mathbb{L}$ can be seen again as pre-space in some $\LC_\mathfrak{r}$. Here again the group $\Aut{}(\mathbb{L})$ naturally embedded in $\Homeo{}(L)$ as a dense subgroup.

In this section we show that under the notion of projective \Fraisse{} limit we developed here the same representation applies to every second-countable compact space $Y$.
Since this is trivial for finite spaces, we will restrict ourselves to the case where $Y$ is infinite. 

The following proposition will be used in the proof of Theorem \ref{compactSpacesMainTheorem}.
\begin{proposition}\label{prop_extension}
Let $G,H$ be Polish groups and let $S$ be a dense subgroup of $G$. Then any continuous homomorphism $f:S\to H$ extends to a continuous homomorphism $\tilde{f}:G\to H$. 
\end{proposition}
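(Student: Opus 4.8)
The plan is to extend $f$ by continuity in the standard way. Since $S$ is dense in $G$, every $g \in G$ is a limit of a sequence $(s_n)$ from $S$ (both groups being Polish, hence metrizable, sequences suffice). The natural definition is $\tilde{f}(g) := \lim_n f(s_n)$, and the work is to show this limit exists, is independent of the chosen sequence, agrees with $f$ on $S$, and is a continuous homomorphism.

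First I would address existence of the limit. The key point is that $f$, being a continuous homomorphism between Polish groups, is uniformly continuous from the left-uniformity of $G$ restricted to $S$ to the left-uniformity of $H$: given a neighborhood $W$ of the identity $e_H$, choose by continuity at $e_S$ a neighborhood $V$ of $e_G$ with $f(V \cap S) \subseteq W$; then for $s, t \in S$ with $s^{-1}t \in V$ we get $f(s)^{-1}f(t) = f(s^{-1}t) \in W$. Hence if $(s_n)$ is Cauchy for a compatible left-invariant metric on $G$ — which a convergent sequence is — then $(f(s_n))$ is Cauchy in $H$. Here I would use that a Polish group admits a compatible left-invariant metric (Birkhoff--Kakutani), and that $H$ need not be complete in that metric, but it is Polish, hence completely metrizable in \emph{some} compatible metric; to get convergence of the Cauchy-ish image sequence I would instead argue directly: $(f(s_n))$ converges iff it is Cauchy in a complete compatible metric on $H$, and the uniform continuity estimate above, applied with a neighborhood basis $(W_k)$ at $e_H$, shows $f(s_m)^{-1}f(s_n) \to e_H$, which for a two-sided-complete (hence: any) compatible metric on the Polish group $H$ forces convergence. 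So the limit exists.

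Next, independence of the sequence: if $s_n \to g$ and $t_n \to g$, interleave them into one sequence converging to $g$; its image converges by the previous step, so the two subsequential limits $\lim f(s_n)$ and $\lim f(t_n)$ coincide. This simultaneously shows $\tilde f$ is well-defined and that $\tilde f\res S = f$ (take the constant sequence). Then $\tilde f$ is a homomorphism: for $g, g' \in G$ pick $s_n \to g$, $t_n \to g'$; continuity of multiplication in $G$ gives $s_n t_n \to gg'$, so $\tilde f(gg') = \lim f(s_n t_n) = \lim f(s_n)f(t_n) = \tilde f(g)\tilde f(g')$ using continuity of multiplication in $H$. Finally continuity of $\tilde f$: it suffices to check continuity at $e_G$. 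Given a neighborhood $W$ of $e_H$, pick a closed neighborhood $W'$ with $W' \subseteq W$ and a neighborhood $V$ of $e_G$ with $f(V \cap S) \subseteq W'$; for any $g \in V$ (assume $V$ open), choose $s_n \in V \cap S$ with $s_n \to g$ — possible since $S$ is dense and $V$ is an open neighborhood of $g$ for $g$ near enough, or more carefully shrink to an open $V_0 \ni e_G$ with $\overline{V_0}\subseteq V$ so that any $g\in V_0$ has $s_n\in V$ eventually — then $\tilde f(g) = \lim f(s_n) \in \overline{W'} \subseteq W$. Hence $\tilde f(V_0) \subseteq W$.

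The main obstacle, and the only genuinely delicate point, is the existence of the limit $\lim_n f(s_n)$: one must convert "$(s_n)$ converges in $G$" into "$(f(s_n))$ converges in $H$" without $f$ being a priori uniformly continuous for a \emph{complete} metric, and without $H$ being complete for the left-invariant metric coming from $G$. The clean fix is to work with a complete compatible metric $d_H$ on $H$ and show $(f(s_n))$ is $d_H$-Cauchy: the estimate $f(s_m)^{-1}f(s_n)\in W_k$ for $m,n$ large says $(f(s_n))$ is Cauchy for the \emph{right} uniformity of $H$ as well upon combining with the symmetric estimate, and a convergent-hence-Cauchy sequence that is Cauchy for both uniformities of a Polish group converges; alternatively one invokes that every Polish group is Raikov-complete (complete for the two-sided uniformity) and checks $(f(s_n))$ is two-sided Cauchy. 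Once existence is secured, the rest is the routine diagonal/interleaving bookkeeping sketched above.
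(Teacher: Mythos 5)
Your argument, in the ``clean fix'' form of your last paragraph (a convergent sequence in $G$ is two-sided Cauchy; since $f$ is a homomorphism its image $\big(f(s_n)\big)$ is two-sided Cauchy in $H$; Polish groups are complete for the two-sided uniformity), is exactly the paper's proof, which is left as an exercise using the compatible complete metric $D(x,y)=d(x,y)+d(x^{-1},y^{-1})$ built from a left-invariant metric $d$, and the remaining bookkeeping (well-definedness, homomorphism, continuity at the identity) is routine and correct as you sketch it. The one caveat is that the claim in your second paragraph --- that $f(s_m)^{-1}f(s_n)\to e_H$ alone ``forces convergence'' in a two-sided-complete compatible metric --- is false (left-Cauchy sequences in Polish groups such as $S_\infty$ or $\Homeo(2^{\N})$ need not converge), so existence of the limit genuinely requires the right-Cauchy estimate $f(s_m)f(s_n)^{-1}=f(s_m s_n^{-1})\to e_H$, which you only supply at the end; with that estimate made explicit the proposal matches the paper's approach.
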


The proof of Proposition \ref{prop_extension} is an easy exercise given that every Polish admits a compatible left-invariant metric $d$ and given this metric we can define a new compatible complete metric $D$ by $D(x,y)=d(x,y)+d(x^{-1},y^{-1})$. For more details, see page 6 of \cite{Actions}.

\begin{theorem}\label{compactSpacesMainTheorem}
Let $G$ be a closed subgroup of $\Homeo{}(Y)$, for some compact metrizable space $Y$. Then there is a
projective \Fraisse{} pre-space $\KB$ such that $K/\mathfrak{r}^{\KB}$ is homeomorphic to $Y$, and the
quotient projection
\[K \mapsto Y\]
induces a continuous group embedding $\Aut(\KB)\hookrightarrow  G$, with dense image in $G$. 
\end{theorem}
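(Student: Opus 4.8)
The plan is to reduce the statement to Proposition~\ref{proposition orbits} by first building, from the given action $G \acts Y$, a zero-dimensional compact space $K$ mapping onto $Y$ together with a closed equivalence relation $\mathfrak{r}$ on $K$ whose quotient recovers $Y$ and on which $G$ acts. The natural construction is an inverse limit: choose a decreasing sequence of finite clopen covers (or rather finite partitions obtained from finite open covers) $(\mathcal{U}_n)$ of $Y$ that generate the topology, refine so that the nerve-type ``overlap'' data is respected, and let $K$ be the inverse limit of the associated finite combinatorial approximations — concretely, $K$ can be realized as a closed subspace of a product of finite discrete sets consisting of threads that ``converge'' to points of $Y$. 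One then defines $x \mathrel{\mathfrak{r}} x'$ iff $x$ and $x'$ project to the same point of $Y$ at every level in the limit; the point is to set up the approximations so that $\mathfrak r$ is the limit of reflexive symmetric graphs and is itself a closed equivalence relation, and the quotient map $K \to K/\mathfrak r$ is a homeomorphism onto $Y$. This is exactly the Irwin--Solecki construction for the pseudo-arc, carried out here for an arbitrary compact metrizable $Y$; the combinatorial condition that makes $\mathfrak r$ transitive in the limit is a ``mesh shrinking'' / amalgamation condition on the bonding maps, and arranging it is the first technical step.

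Next I would lift the action: since $G \le \Homeo(Y)$ is closed, hence Polish, and acts on $Y$, I want it to act on $K$. The clean way is not to lift $G$ itself but to apply Proposition~\ref{proposition orbits} to the group $\overline{G_0} \le \Homeo(K)$, where $G_0$ is a suitable group of homeomorphisms of $K$ covering elements of $G$. But there is a cleaner route: equip $K$ with the purely dual language $\LC$ of orbit predicates for the action on $K$ of the closed subgroup $\tilde G \le \Homeo(K)$ consisting of all homeomorphisms of $K$ that descend to an element of $G$ on the quotient $K/\mathfrak r \cong Y$. Proposition~\ref{proposition orbits} then gives a purely dual projective \Fraisse{} structure with $\Aut(\KB) = \tilde G$; adjoining the predicate $\mathfrak r$ (whose interpretation is the closed equivalence relation above, and which can be written in $\LC_{\mathfrak r}$ using the dual-encoding trick of Section~\ref{Section: Turning a structure direct to dual}) keeps $\KB$ projectively \Fraisse{}, because every automorphism already respects $\mathfrak r$, so the age and ultrahomogeneity are unaffected. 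This yields a projective \Fraisse{} pre-space $\KB$ with $K/\mathfrak{r}^{\KB} \cong Y$.

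It remains to analyze the induced map on automorphism groups. Every $g \in \Aut(\KB)$ is a homeomorphism of $K$ preserving $\mathfrak r$, hence descends to a homeomorphism $\bar g$ of $Y$; by construction $\bar g \in G$, and $g \mapsto \bar g$ is a continuous homomorphism $\Aut(\KB) \to G$. Injectivity requires that an automorphism of $\KB$ is determined by its action on the quotient — this is where one uses that $K$ was built as the inverse limit ``over'' $Y$ with no extra identifications allowed by automorphisms, i.e.\ the fibers of $K \to Y$ are rigid relative to $\tilde G$; I would arrange the approximations so that $\tilde G$ acts faithfully on $Y$, equivalently so that the only element of $\tilde G$ fixing every $\mathfrak r$-class is the identity. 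For density of the image, I would argue that the subgroup $G_0 \le G$ of homeomorphisms of $Y$ that lift to automorphisms of $\KB$ is dense in $G$: given $g \in G$ and a basic neighborhood, one approximates $g$ at a finite level of the inverse system by a partial isomorphism of a finite structure $\AB \in \mathrm{Age}(\KB)$ and extends it to a genuine automorphism using projective ultrahomogeneity (Lemma~\ref{1 point=fraisse}); then Proposition~\ref{prop_extension} promotes the embedding of the dense subgroup $G_0$ to the conclusion, or more directly just certifies that the closure of the image is all of $G$.

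The main obstacle I expect is the first step: engineering the inverse system so that simultaneously (i) the quotient is exactly $Y$ (not some larger zero-dimensional ``blow-up''), (ii) $\mathfrak r$ is genuinely transitive in the limit and not merely an approximate equivalence relation, and (iii) no automorphism of $\KB$ other than the identity acts trivially on $Y$, so that the embedding is injective and the image is dense rather than a proper closed subgroup. Conditions (i) and (ii) pull against having enough flexibility for ultrahomogeneity, and balancing them is the heart of the proof; everything after that is a routine back-and-forth together with the already-proved Propositions~\ref{proposition orbits} and~\ref{prop_extension}.
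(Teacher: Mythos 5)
Your overall architecture (a zero-dimensional compact $K$ mapping onto $Y$, the fiber relation as $\mathfrak{r}$, then Proposition \ref{proposition orbits} plus Proposition \ref{prop_extension}) matches the paper, but the middle step — which group you feed into Proposition \ref{proposition orbits} — is where the proposal breaks, and it is exactly the step you treat as routine. First, taking $\tilde G$ to be \emph{all} homeomorphisms of $K$ that descend to an element of $G$ destroys injectivity: in general there are nontrivial $\mathfrak{r}$-preserving homeomorphisms of $K$ descending to $\mathrm{id}_Y$ (e.g.\ if $Y$ is zero-dimensional and $K=Y\times 2^{\N}$ with $p$ the projection, every $\mathrm{id}\times\sigma$ lies in $\tilde G$), and "arranging the approximations so that $\tilde G$ acts faithfully" is not something you can engineer for the \emph{full} group of fiber-preserving lifts while keeping the image large. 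Second, and more seriously, the density argument is circular: after applying Proposition \ref{proposition orbits} the language consists of orbit predicates for $\tilde G\acts [n]^K$, so two epimorphisms onto a finite structure induce the same structure precisely when they lie in the same $\tilde G$-orbit; hence using projective ultrahomogeneity to approximate a given $g\in G$ presupposes an element of $\tilde G$ already (approximately) covering $g$. Nothing in your construction of $K$ guarantees that any nonidentity $g\in G$ lifts to $\Homeo(K)$ at all: a lift compatible with $p$ forces $g$ to permute the countable algebra of sets $p(\tilde F)$ coming from clopen subsets of $K$, which a generic $g$ will not do. So the image of $\Aut(\KB)$ in $G$ could a priori be far from dense.

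The idea you are missing is equivariance of the approximation with respect to a \emph{countable dense subgroup}: the paper fixes a countable dense $H\le G$ and builds $K$ as the Stone space of a countable Boolean algebra $\mathcal{F}$ of \emph{regular closed} subsets of $Y$ that contains a basis of small sets and is closed under the $H$-action. Then $H$ lifts canonically to $H_K\le\Homeo(K)$, Proposition \ref{proposition orbits} is applied to $\overline{H_K}$ (not to the full group of $\mathfrak{r}$-preserving homeomorphisms), $\mathfrak{r}$ (the fiber relation of $p:K\to Y$) is adjoined — it is $\Aut$-invariant, so the structure stays projectively \Fraisse{} — and the continuous map $T_0:H_K\to H\le G$ is extended to $T:\overline{H_K}\to G$ by Proposition \ref{prop_extension}. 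Density of the image is then automatic because it contains $H$, and injectivity follows because nonempty elements of $\mathcal{F}$ have nonempty interior, so a clopen set moved off itself by an automorphism has image in $Y$ moved off itself by $T(h)$. Note also that the part you single out as the main obstacle is actually free: for any continuous surjection $p:K\to Y$ from a zero-dimensional compactum, the relation $p(x_0)=p(x_1)$ is already a closed equivalence relation whose quotient is $Y$; no mesh-shrinking transitivity argument is needed. The real tension is between keeping $\mathcal{F}$ countable (so $K$ is metrizable) and making it invariant under enough of $G$, which is precisely what the countable dense subgroup resolves.
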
 
\begin{proof}

Let $Y$ be an infinite compact Polish space and let $H$ be a countable, dense subgroup of $G$. In what follows, we define a countable Boolean algebra $(\mathcal{F},0_\mathcal{F},1_\mathcal{F},\wedge,\vee,')$ of closed subsets of $Y$ as well as an action of $H$ on $\mathcal{F}$ via Boolean algebra automorphisms. Every set $F\in\mathcal{F}$ will be regular closed. Recall that an open set $U$ is called regular open if $\mathrm{int}(\overline{U})=U$ and a closed set $F$ is called regular closed if $\overline{\mathrm{int}(F)}=F$. We define $0_\mathcal{F},1_\mathcal{F}$ and the operations $\wedge,\vee,'$ as follows:
\begin{itemize}
\item $0_\mathcal{F}=\emptyset$;
\item $1_\mathcal{F}=Y$;
\item $F_1\wedge F_2= \overline{\mathrm{int}(F_1\cap F_2)}$;
\item $F_1\vee F_2= F_1\cup F_2$;
\item $F'=\overline{F^c}$.
\end{itemize}
The boolean algebra axioms are satisfied by the above configuration since $\mathcal{F}$ consists of regular closed sets (see also \cite{Halmos} for the boolean algebra of regular open sets).

To construct the boolean algebra fix first a compatible complete metric $d$ on $Y$. For every $n$ chose a finite open cover $\{V^n_0,\ldots,V^n_{k_n}\}$ of $Y$ such $\mathrm{diam}(V_i^n)<1/n$ for every $i\in\{0,\ldots,k_n\}$. Since $\overline{V}$ is regular closed for every open $V$ we have that the collection $\mathcal{J}=\{F^n_i: F^n_i=\overline{V^n_i},\; n\in\N,\,\,0\leq i\leq k_n \}$ consists of regular closed sets. We define $\mathcal{F}$ to be the least family of closed subsets of $Y$ such that:  
\begin{enumerate}
\item $\mathcal{J}\subset\mathcal{F}$;
\item $\mathcal{F}$ is closed under the boolean operators $\wedge,\vee,'$ and
\item $\mathcal{F}$ is closed under translation by elements of $H$, i.e., if $h\in H$ and $F\in\mathcal{F}$ then $h(F)\in\mathcal{F}$.
\end{enumerate}
Notice that all these operations preserve regularity and since $\mathcal{J}$ and $H$ are countable $\mathcal{F}$ is a countable family of regular closed sets. Notice that this implies that the only $F\in\mathcal{F}$ that has empty interior is the empty set. The group $H$ is acting on $\mathcal{F}$ with Boolean algebra automorphisms: for every $h\in H$ and $F\in\mathcal{F}$ let
\[h\cdot F = h(F).\]

Let $K=\mathrm{S}(\mathcal{F})$ be the Stone space of all ultrafilters $x$ on $\mathcal{F}$. This space comes with a topology whose basic clopen sets can be taken to be the sets of the form $\tilde{F}=\{x : F\in x\}$ for $F\in\mathcal{F}$. The space $K$ is a compact, second-countable, and zero-dimensional. Let $p: K\to Y$ be the natural projection defined by:
\[\big\{p(x)\big\}=\bigcap_{F\in x} F.\]
The map $p$ is continuous surjection with $p(\tilde{F})=F$ for every $F\in\mathcal{F}$. We can turn now $K$ to a $\{\mathfrak{r}\}$-structure $K_{\mathfrak{r}}$ by setting $K_{\mathfrak{r}}\models \mathfrak{r}(x_0,x_1)$ if and only if $p(x_0)=p(x_1)$. It is immediate that $K_{\mathfrak{r}}$ is a pre-space and that $K/\mathfrak{r}^{K_{\mathfrak{r}}}=Y$.

Notice now that $H$ is acting on $K$ with homeomorphisms: for every $h\in H$ and $x\in K$
\[h\cdot x=\big\{h(F): F\in x\big\}\in K.\]
This action is faithful since for every pair $y_0,y_1\in Y$ there are $F_0,F_1\in\mathcal{F}$ such that $y_0\in \mathrm{int}(F_0),y_1\in\mathrm{int}(F_1)$ and $F_0\cap F_1=\emptyset$. Therefore, $H$ embeds into $\Homeo(K)$.
 We will denote this copy of $H$ inside $\Homeo(K)$ by $H_K$  to distinguish it from $H$ which is a subgroup of $\Homeo(Y)$ and we will denote by $T_0$ the inverse of this embedding, i.e.,
\[T_0:H_K\to H\quad\text{with}\quad \widetilde{T_0(h)\big(F\big)}=h(\tilde{F}),\,\,\,\text{for every}\,\,\,F\in\mathcal{F}.\]
The map $T_0$ is also continuous. To see that, let $h\in H_K$ and let $V(L,U)$ be an open neighborhood of $T_0(h)$ in $\Homeo{}(Y)$, i.e., $T_0(h)(L)\subset U$. Since the family  $\{\mathrm{int}(F): F\in \mathcal{J}\}$  constitutes a basis of $Y$ and since $T_0(h)(L)$ is compact, we can find $F_1,\ldots,F_k\in\mathcal{J}$ such that $T_0(h)(L)\subseteq{}F_1\cup\ldots\cup F_k\subseteq{}U$. Let $F_0=F_1\vee\ldots\vee F_k$, then both $F_0$ and $h^{-1}(F_0)$ belong to $\mathcal{F}$. Moreover, $V(\widetilde{h^{-1}(F_0)},\tilde{F_0})$ is an open neighborhood of $h$ in $H_K$ that is mapped via $T_0$ completely inside $V(L,U)$, proving that $T_0$ is continuous at $h$.  

 By applying the Proposition \ref{proposition orbits}, we can endow $K$ with a topological \Fraisse{} structure $\KB_0$ in a purely dual language $\LC$, such that $\overline{H_K}=\Aut{}(\KB_0)$ (the closure here is taken in $\Homeo{}(K)$). By Proposition \ref{prop_extension} the map $T_0$ extends to a continuous homomorphism $T:\Aut{}(\KB_0)\to G$. We denote the image of $\Aut{}(\KB_0)$ under $T$ by $\hat{H}$. Notice that $\hat{H}$ lies densely in $\Homeo{}(K)$ since $H < \hat{H} \le \Homeo{}(K)$, and since the same is true for $H$. Moreover, by the density of $H_K$ in $\overline{H_K}$ the continuity of $T$ and the fact that every $F\in\mathcal{F}$ has non-empty interior we get that for every $h\in\overline{H_K}$ and for every $F\in\mathcal{F}$ the following equality holds
 \begin{equation}
 \widetilde{T(h)\big(F\big)}=h(\tilde{F})\label{1}.
 \end{equation}
 
We combine now the structures $\KB_0$ and $K_{\mathfrak{r}}$ into one $\LC_\mathfrak{r}$-structure $\KB$ on domain $K$, where $\LC_\mathfrak{r}=\mathcal{L}\cup\{\mathfrak{r}\}$. Notice that $\mathfrak{r}^{\KB}$ is invariant under $\Aut(\KB_0)$ since $(x_0,x_1)\in \mathfrak{r}^{\KB}$ if and only if for all $F_0,F_1\in\mathcal{F}$ with $x_0\in \tilde{F}_0$ and $x_1\in \tilde{F}_1$ we have that $F_0\cap F_1\neq \emptyset$.  Thus $\Aut{}(\KB)=\Aut{}(\KB_0)=\overline{H_K}$, every $\AB_0\in\mathrm{Age}(\KB_0)$ uniquely extends to an $\AB\in\mathrm{Age}(\KB)$ and $\KB$ is a also a projective \Fraisse{} structure. The fact that $p(\tilde{F})=F$ for every $F\in\mathcal{F}$ and the relation (\ref{1}) above let us view $T:\Aut{}(\KB)\to G$ as the homomorphism induced by the quotient $p:K\to Y$.

 We are left to show that $T$ is injective. Let $h\in\Aut(\MB)$ so that $h\neq \mathrm{id}_{\Aut{}(\MB)}$. By the continuity of $h$ we can find a non-empty $F$ in $\mathcal{F}$  so that $F\wedge h(F)=\emptyset$. Therefore, the interiors in $Y$ of $p(F)$ and $p\big(h(F)\big)$ do not intersect and because the interior in $Y$ of every non-empty $F$ in $\mathcal{F}$ is non-empty we have that $T(h)\neq \mathrm{id}_{\Homeo(Y)}$.  
\end{proof}

We should remark here that the image $\hat{H}$ of $\Aut{}(\KB)$ under $T$ is in general a meager subset of $G$. This can be seen as follows: first notice that as a corollary of Pettis theorem we have that if $f: B\to D$ is a Baire-measurable homomorphism between Polish groups and $f(B)$ is not meager, then $f$ is open (see for example Theorem 1.2.6 \cite{Actions}). Now notice
that for $F\in\mathcal{F}$ the set $V(\tilde{F},\tilde{F})$ is open in $\Homeo(K)$ but the set  $V(F,F)$ is rarely open in $G$ (except if $Y$ is zero-dimensional or if $G$ contains very few homeomorphisms). Therefore $T$ will fail in general to be an open map.

\begin{bibdiv}
\begin{biblist}

\bib{Lelek}{article}
{
author = {Barto\v{s}ov\'{a}, Dana}
author = {Kwiatkowska, Aleksandra}
title = {Lelek fan from a projective Fraisse limit }
date = {2013}
journal = {arXiv:1312.7514v1}
pages = {1-22}
}

\bib{Actions}{book}
{
  title = {The descriptive set theory of Polish group actions}
  author = {Becker, Howard}
  author = {Kechris, S. Alexander}
  series = {London Mathematical Society Lecture Note Series}
  year = {1996}
  publisher = {Cambridge University Press}
}

\bib{Camerlo}{article}
{
title = {Characterising quotients of projective Fraïssé limits}
journal = {Topology and its Applications}
volume = {157}
number = {12}
pages = {1980 - 1989}
year = {2010}
author = {Riccardo Camerlo}
}

\bib{Graham}{article}
{
author = {Graham, Ronald Lewis}
author = {Rothschild, Bruce Lee}
title = {Ramsey’s theorem for $n$-parameter sets}
date = {1971}
journal = {Trans. Amer. Math. Soc.}
pages = {257–292}
}

\bib{Halmos}{book}
{
  title = {Lectures on Boolean algebras}
  author = {Halmos, Paul Richard}
  lccn = {63025105}
  series = {Van Nostrand mathematical studies}
  year = {1963}
  publisher = {Van Nostrand}
}

\bib{Hodges}{book}
{
author = {Hodges, Wilfrid}
title = {Model Theory}
date = {1993}
publisher = {Cambridge University Press, Cambridge}
}

\bib{Irwin}{article}
{
author = {Irwin, Trevor} 
author = {S\l{}awomir Solecki}
title = {Projective \Fraisse{} limits and the pseudo-arc}
date = {2006}
journal = {Transactions of the American Mathematical Society }
volume = {358, no. 7}
pages = {3077-3096}
}

\bib{survey}{article}
{
author = {Kechris, Alexander S.}
title = {Dynamics of non-archimedean Polish groups}
date = {2012}
page = {375-397}
journal = {European Congress of Mathematics, Krakow}
publisher = {R. Latala et al., Eds., European Math. Society, 2014}
}

\bib{Ample}{article}
{
author = {Kwiatkowska, Aleksandra}
title = {The group of homeomorphisms of the Cantor set has ample generics}
date = {2012}
journal = {Bulletin of the London Mathematical Society}
}

\bib{Large}{article}
{
author = {Kwiatkowska, Aleksandra}
title = {Large conjugacy classes, projective \Fraisse{} limits and the pseudo-arc}
date = {2013}
journal = {Israel Journal of Mathematics}
pages = {1-23}
}

\bib{SoleckiRamsey1}{article}
{
author = {S\l{}awomir, Solecki}
title = {A Ramsey theorem for structures with both
relations and functions}
date = {2010}
journal = {Journal of Combinatorial Theory}
volume = {A 117}
pages = {704-714}
}

\bib{SoleckiRamsey2}{article}
{
author = {S\l{}awomir, Solecki}
title = {Direct Ramsey theorem for structures involving relations and functions}
date = {2012}
journal = {Journal of Combinatorial Theory}
volume = {A 119.2}
pages = {440-449}
}

\end{biblist}
\end{bibdiv}

\end{document}